\numberwithin{equation}{section}
\newtheorem{theorem}{Theorem}[section]
\newtheorem{corollary}{Corollary}[theorem]
\begin{document}
\author{Alexander E. Patkowski}
\title{A remark on Fine's arithmetic functions}

\maketitle
\begin{abstract}In this note, we take another look at some arithmetic identities of N.J. Fine associated with divisor functions. We connect these functions with indefinite quadratic forms using a result due to Andrews. As a consequence, arithmetic theorems are extracted. \end{abstract}

\keywords{\it Keywords: \rm Divisors, indefinite quadratic forms; $q$-series.}

\subjclass{ \it 2020 Mathematics Subject Classification 11A05, 11E16}

\section{Introduction and main theorems} In Fine's monograph [3], several infinite products are paraphrased in terms of divisor functions. The two we are primarily concerned with herein are [3, pg.12,eq.(10.6)--(10.7)]
\begin{equation}\prod_{n=1}^{\infty} \frac{(1-q^{pn})^2}{(1-q^{pn-r})(1-q^{pn-p+r})}=\sum_{n=0}^{\infty}E_{r}(2pn+r(p-r);2p)q^n, \end{equation}
for natural numbers with $0<r<p,$ $(r,2p)=1,$ $|q|<1,$ and [3, pg.76, eq.(31.5)]
\begin{equation} q^r\left(\prod_{n=1}^{\infty} \frac{(1-q^{pn})^2}{(1-q^{pn-r})(1-q^{pn-p+r})}\right)^2=\frac{1}{p}\sum_{n=1}^{\infty}\left(\sum_{\substack{\delta d=pn-r^2 \\ d\equiv r\pmod{p}}}(d+\delta)\right)q^n,\end{equation}
for $0<r<p,$ $(r,p)=1,$ where \begin{equation} E_{r}(n;m)=\sum_{\substack{d|n \\ d\equiv r\pmod{m}}}1-\sum_{\substack{d|n \\ d\equiv -r\pmod{m}}}1.\end{equation}

The corollaries of (1.1)--(1.2) that follow in [3] are primarily connections between positive definite quadratic forms and divisor functions, which are classical and elegant in their own right. However, the product contained on the left sides of (1.1) and (1.2) is a rather special one, having an indefinite quadratic form expansion. This observation lead us to the following theorems, which we prove in the following section.
\begin{theorem}\label{thm:thm1} For $0<r<p,$ $(r,2p)=1,$
$$E_{r}(2pn+r(p-r);2p)=\sum_{\substack{k,l=-\infty \\k\ge|l|\\ n=p(k^2-l^2)/2+p(k+l)/2-lr}}^{\infty}(-1)^{k+l}.$$
\end{theorem}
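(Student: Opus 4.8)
The plan is to start from Fine's product identity (1.1), re-express the left-hand infinite product, feed it into Andrews' indefinite (Hecke-type) double-sum identity, and then compare the resulting coefficient of $q^{n}$ with the one already recorded on the right of (1.1). First I would rewrite the product in $q$-Pochhammer notation as
$$\prod_{n=1}^{\infty}\frac{(1-q^{pn})^2}{(1-q^{pn-r})(1-q^{pn-p+r})}=\frac{(q^{p};q^{p})_\infty^{2}}{(q^{r};q^{p})_\infty\,(q^{p-r};q^{p})_\infty},$$
so that the whole object is visibly of the shape $(Q;Q)_\infty^{2}/\big((z;Q)_\infty (Q/z;Q)_\infty\big)$ under the specialization $Q=q^{p}$ and $z=q^{r}$ (so that $Q/z=q^{p-r}$). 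This is precisely the shape to which Andrews' result applies.

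Next I would invoke Andrews' identity in the form
$$\frac{(Q;Q)_\infty^{2}}{(z;Q)_\infty (Q/z;Q)_\infty}=\sum_{k\ge|l|}(-1)^{k+l}\,Q^{\binom{k+1}{2}-\binom{l}{2}}\,z^{-l},$$
which expands the quotient as a Hecke-type double sum over the cone $k\ge|l|$ carrying an indefinite quadratic form in the exponent. Substituting $Q=q^{p}$ and $z=q^{r}$ turns the exponent of $q$ into $p\big(\binom{k+1}{2}-\binom{l}{2}\big)-lr$. The only genuinely delicate point is to pin down Andrews' identity with exactly the summation cone $k\ge|l|$, the sign $(-1)^{k+l}$, and the linear shift $-lr$, so that no parity condition or boundary contribution is mislaid in the parameter matching; this is where I expect the main effort to lie.

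Finally I would simplify the exponent using $\binom{k+1}{2}-\binom{l}{2}=\tfrac{(k^2-l^2)+(k+l)}{2}$, giving
$$p\Big(\binom{k+1}{2}-\binom{l}{2}\Big)-lr=\frac{p(k^2-l^2)}{2}+\frac{p(k+l)}{2}-lr,$$
which is exactly the exponent constraint appearing in the statement. Collecting the coefficient of $q^{n}$ on the product side then yields $\sum_{k\ge|l|,\ n=p(k^2-l^2)/2+p(k+l)/2-lr}(-1)^{k+l}$, while (1.1) identifies that same coefficient with $E_{r}(2pn+r(p-r);2p)$. Equating the two expressions for the coefficient of $q^{n}$ proves the theorem, the hypotheses $0<r<p$ and $(r,2p)=1$ being inherited directly from (1.1).
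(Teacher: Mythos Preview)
Your proof is correct and follows essentially the same route as the paper: specialize Andrews' Hecke-type expansion (2.1) via $q\mapsto q^{p}$ and a suitable choice of $z$, then match coefficients of $q^{n}$ against Fine's identity (1.1). The only cosmetic difference is that the paper takes $z=q^{-r}$ directly in (2.1) (where the summand carries $z^{l}$), whereas you first rewrite Andrews' identity in the equivalent form with $z^{-l}$ and then set $z=q^{r}$; both substitutions produce the same exponent $p(k^{2}-l^{2})/2+p(k+l)/2-lr$.
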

Define 
$$D_{r,m}(n):=\sum_{\substack{d|n \\ d\equiv r\pmod{m}}}1,$$ so that by (1.3), $E_{r}(n;m)=D_{r,m}(n)-D_{-r,m}(n).$
We now state a corollary that follows from our first theorem.
\begin{corollary}\label{thm:Cor1} Let $0<r<p,$ $(r,2p)=1.$ We have that $D_{r,2p}(2pn+r(p-r))=D_{-r,2p}(2pn+r(p-r))$ if and only if $n$ is not of the form $Q_{r,p}(k,l):=p(k^2-l^2)/2+p(k+l)/2-lr,$ $k\ge|l|,$ $l,k\in\mathbb{Z},$ or the number of pairs $(k,l)$ of this form have an equal number which have $k+l\equiv0\pmod{2}$ as those which have $k+l\equiv1\pmod{2}.$ Moreover, $D_{r,2p}(2pn+r(p-r))$ is greater (resp. less) than $D_{-r,2p}(2pn+r(p-r))$ if the number of pairs $(k,l)$ which satisfy $n=Q_{r,p}(k,l),$ $k\ge|l|,$ $l,k\in\mathbb{Z},$ with $k+l\equiv0\pmod{2}$ are greater (resp. less) than those with $k+l\equiv1\pmod{2}.$
\end{corollary}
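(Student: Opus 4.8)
The plan is to read the corollary off directly from Theorem \ref{thm:thm1} together with the decomposition $E_{r}(n;m)=D_{r,m}(n)-D_{-r,m}(n)$ recorded in (1.3). First I would substitute $n\mapsto 2pn+r(p-r)$ and $m=2p$ into that decomposition to obtain
$$E_{r}(2pn+r(p-r);2p)=D_{r,2p}(2pn+r(p-r))-D_{-r,2p}(2pn+r(p-r)),$$
and then invoke Theorem \ref{thm:thm1} to equate the left-hand side with the weighted sum $\sum(-1)^{k+l}$ ranging over pairs $(k,l)\in\mathbb{Z}^2$ with $k\ge|l|$ and $n=Q_{r,p}(k,l)$.

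Next I would interpret that weighted sum combinatorially. Since $(-1)^{k+l}$ equals $+1$ when $k+l\equiv0\pmod2$ and $-1$ when $k+l\equiv1\pmod2$, the sum equals $A(n)-B(n)$, where $A(n)$ counts the admissible pairs with $k+l$ even and $B(n)$ counts those with $k+l$ odd. For these counts to be meaningful I must first verify that only finitely many admissible pairs solve $n=Q_{r,p}(k,l)$. I would do this via the substitution $u=k+l$, $v=k-l$, under which the constraint $k\ge|l|$ becomes $u,v\ge0$ (with $u\equiv v\pmod2$, since $u+v=2k$) and the form simplifies to $Q_{r,p}(k,l)=\tfrac12\bigl(puv+(p-r)u+rv\bigr)$. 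Because $p>r>0$, all three summands are nonnegative, and the product term $puv$ forces boundedness of $u,v$ for any fixed value $n$; hence only finitely many $(u,v)$, and thus finitely many $(k,l)$, arise, so $A(n)$ and $B(n)$ are well-defined finite quantities.

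Combining the two displays yields the master identity
$$D_{r,2p}(2pn+r(p-r))-D_{-r,2p}(2pn+r(p-r))=A(n)-B(n).$$
The three assertions are then just the sign trichotomy for $A(n)-B(n)$: the two divisor counts are equal exactly when $A(n)=B(n)$, which holds vacuously (both sides zero) when $n$ is not of the form $Q_{r,p}(k,l)$ and otherwise demands that the even-parity and odd-parity pairs balance; likewise $D_{r,2p}>D_{-r,2p}$ corresponds to $A(n)>B(n)$ and $D_{r,2p}<D_{-r,2p}$ to $A(n)<B(n)$, with $A(n)$ and $B(n)$ being precisely the counts of pairs $(k,l)$ with $k\ge|l|$, $n=Q_{r,p}(k,l)$, and $k+l$ even, respectively odd.

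I anticipate no substantial obstacle, since the statement is essentially a reformulation of Theorem \ref{thm:thm1}. The only genuine point requiring care is the finiteness of $A(n)$ and $B(n)$ for an \emph{indefinite} form, which is why I would carry out the $(u,v)$ substitution explicitly rather than merely assert that the sum in Theorem \ref{thm:thm1} terminates; the remaining bookkeeping of parities and the splitting into the three cases is then routine.
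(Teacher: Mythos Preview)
Your proposal is correct and follows essentially the same route as the paper: the paper's proof simply says the corollary ``follows from inspecting the weight in the sum on the right hand side of Theorem~\ref{thm:thm1}, and noting that $D_{\pm r, p}(n)$ is a non-negative quantity,'' which is exactly your master identity $D_{r,2p}-D_{-r,2p}=A(n)-B(n)$ followed by the sign trichotomy. Your explicit $(u,v)$-substitution to confirm that $A(n),B(n)$ are finite is additional care the paper leaves implicit (finiteness being built into the convergence of the $q$-series in Andrews' expansion), but it is a welcome verification and the computation $Q_{r,p}=\tfrac12\bigl(puv+(p-r)u+rv\bigr)$ with $u,v\ge 0$ is correct.
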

Our next theorem follows from (1.2).

\begin{theorem}\label{thm:thm2} For $0<r<p,$ $(r,p)=1,$
$$\frac{1}{p}\sum_{\substack{\delta d=pn-r^2 \\ d\equiv r\pmod{p}}}(d+\delta)=\sum_{\substack{k_1,l_1,k_2,l_2=-\infty \\k_1\ge|l_1|, k_2\ge|l_2|\\ n=p(k_1^2-l_1^2)/2+p(k_1+l_1)/2-l_1r+p(k_2^2-l_2^2)/2+p(k_2+l_2)/2-l_2r+r}}^{\infty}(-1)^{k_1+l_1+k_2+l_2}.$$
\end{theorem}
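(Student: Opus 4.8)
The plan is to deduce Theorem \ref{thm:thm2} from Theorem \ref{thm:thm1} by squaring the underlying infinite product. First I would combine Theorem \ref{thm:thm1} with the series expansion (1.1): writing $P(q)$ for the product on the left of (1.1), substituting the formula of Theorem \ref{thm:thm1} for each coefficient $E_{r}(2pn+r(p-r);2p)$ and interchanging the order of summation (permissible for $|q|<1$ by absolute convergence) yields the indefinite-form representation
$$P(q)=\sum_{\substack{k,l=-\infty\\ k\ge|l|}}^{\infty}(-1)^{k+l}q^{Q_{r,p}(k,l)},$$
where $Q_{r,p}(k,l)=p(k^2-l^2)/2+p(k+l)/2-lr$ as in Corollary \ref{thm:Cor1}. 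One checks that $Q_{r,p}(k,l)\ge0$ whenever $k\ge|l|$, since $Q_{r,p}(k,l)=p(k+l)(k-l+1)/2-lr$ and the three factors are nonnegative in that range, so the right-hand side is a genuine power series in $q$ and the rearrangement is consistent with (1.1).

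Next I would square this representation and multiply by $q^{r}$. The left side of (1.2) is precisely $q^{r}P(q)^2$, and forming the Cauchy product of the double series with itself gives
$$q^{r}P(q)^2=\sum_{\substack{k_1,l_1,k_2,l_2=-\infty\\ k_1\ge|l_1|,\,k_2\ge|l_2|}}^{\infty}(-1)^{k_1+l_1+k_2+l_2}q^{\,Q_{r,p}(k_1,l_1)+Q_{r,p}(k_2,l_2)+r}.$$
Because each exponent is nonnegative and grows with the indices, only finitely many terms contribute to any fixed power of $q$; collecting the coefficient of $q^n$ on the right then produces exactly the quadruple sum in Theorem \ref{thm:thm2}, restricted by the constraint $n=Q_{r,p}(k_1,l_1)+Q_{r,p}(k_2,l_2)+r$ that appears in its index set.

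Finally I would read off the coefficient of $q^n$ from the right side of (1.2), namely $\tfrac1p\sum_{\delta d=pn-r^2,\ d\equiv r\pmod p}(d+\delta)$, and equate it with the coefficient just computed. Since both sides of (1.2) agree as analytic functions on $|q|<1$, their power-series coefficients coincide, and this comparison is precisely the assertion of Theorem \ref{thm:thm2}.

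I expect the only real obstacle to be bookkeeping: justifying the interchange of summation in deriving the representation of $P(q)$, the term-by-term squaring, and verifying that the exponent constraint emerging from the Cauchy product matches the index set written in the theorem. Because $P(q)$ converges absolutely for $|q|<1$ and, via the nonnegativity of $Q_{r,p}$, is a bona fide power series, these manipulations are legitimate, and no delicate convergence issue should arise beyond the standard rearrangement of absolutely convergent series.
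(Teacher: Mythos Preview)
Your approach is essentially the paper's: obtain the indefinite expansion of the product $P(q)$, square it, multiply by $q^{r}$, and equate coefficients of $q^n$ with the right side of (1.2). The paper does this in one line by inserting the $(z,q)\to(q^{-r},q^{p})$ specialization of Andrews' identity (2.1) directly into (1.2).

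One small point worth noting: you reach the expansion of $P(q)$ by feeding Theorem~\ref{thm:thm1} back into (1.1). That is logically fine, but Theorem~\ref{thm:thm1} carries the hypothesis $(r,2p)=1$, whereas Theorem~\ref{thm:thm2} is stated under the weaker hypothesis $(r,p)=1$. The expansion $P(q)=\sum_{k\ge|l|}(-1)^{k+l}q^{Q_{r,p}(k,l)}$ actually comes straight from (2.1) and needs no coprimality condition at all; the condition $(r,2p)=1$ in Theorem~\ref{thm:thm1} is inherited from Fine's (1.1), not from Andrews' identity. So to get the full strength of Theorem~\ref{thm:thm2} you should cite (2.1) directly rather than route through Theorem~\ref{thm:thm1}. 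Also, your nonnegativity check for $Q_{r,p}(k,l)$ is slightly off as written (``the three factors are nonnegative'' ignores the $-lr$ term); the correct bound for $l>0$ uses $p(k+l)(k-l+1)/2\ge pl>rl$, though in any case this nonnegativity is already implicit in (1.1).
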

Next we have the equivalent of a non-negativity result concerning certain indefinite quaternary quadratic forms.

\begin{corollary}\label{thm:Cor2} Let $0<r<p,$ $(r,p)=1.$ We have that, for each natural number $n,$ the number of quadruples $(k_1,k_2,l_1,l_2)$ which satisfy $n=Q_{r,p}(k_1,l_1)+Q_{r,p}(k_2,l_2)+r,$ $k_1\ge|l_1|,$ $k_2\ge|l_2|,$ $l_1,l_2,k_1, k_2\in\mathbb{Z},$ which have $k_1+k_2+l_1+l_2\equiv0\pmod{2}$ are greater than or equal to those which have $k_1+k_2+l_1+l_2\equiv1\pmod{2}.$
\end{corollary}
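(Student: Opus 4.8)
The plan is to deduce Corollary 2 directly from Theorem 2 by observing that the right-hand side of Theorem 2 is, for each fixed $n$, a signed count of quadruples $(k_1,l_1,k_2,l_2)$ satisfying the stated constraints, with sign $(-1)^{k_1+l_1+k_2+l_2}$. Quadruples with $k_1+k_2+l_1+l_2$ even contribute $+1$ and those with it odd contribute $-1$, so the right-hand sum equals $N_0(n)-N_1(n)$, where $N_0(n)$ (resp. $N_1(n)$) counts the admissible quadruples with $k_1+k_2+l_1+l_2\equiv 0$ (resp. $\equiv 1$) modulo $2$. The corollary is exactly the assertion $N_0(n)\ge N_1(n)$ for every natural number $n$, which is equivalent to the nonnegativity of the right-hand side of Theorem 2.

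**Therefore the core task** is to show that the left-hand side of Theorem 2 is nonnegative. By Theorem 2 this equals
\[
\frac{1}{p}\sum_{\substack{\delta d=pn-r^2 \\ d\equiv r\pmod{p}}}(d+\delta).
\]
I would first argue that the summation runs over factorizations $\delta d=pn-r^2$ with $d\equiv r\pmod p$. From $d\equiv r\pmod p$ and $\delta d\equiv -r^2\pmod p$ one gets $r\delta\equiv -r^2\pmod p$, and since $(r,p)=1$ this forces $\delta\equiv -r\pmod p$. Hence every factor $d$ is positive in the natural indexing (the product $pn-r^2>0$ for $n\ge 1$ and $0<r<p$), and the complementary divisor $\delta$ is likewise positive. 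Thus each summand $d+\delta$ is a sum of two positive integers, so every term is strictly positive and the whole sum is nonnegative (indeed positive whenever the index set is nonempty, and zero otherwise). Dividing by $p>0$ preserves nonnegativity.

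**The main obstacle** I anticipate is not the sign analysis but confirming that the indexing set consists genuinely of positive-divisor pairs rather than signed ones, since an indefinite-quadratic-form expansion can superficially suggest cancellation. The cleanest route is to read the sign-free arithmetic sum straight from equation (1.2): because the product on the left of (1.2) is a power series in $q$ with the prefactor $q^r$, the coefficient extraction produces the divisor sum $\sum_{\delta d=pn-r^2,\,d\equiv r}(d+\delta)$ with all of $d,\delta$ positive, so each coefficient is manifestly a sum of positive terms. Combining this manifest nonnegativity of the coefficients in (1.2) with the identity in Theorem 2 then yields $N_0(n)-N_1(n)\ge 0$, which is precisely the statement of the corollary. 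Finally I would remark that nonnegativity, rather than strict positivity, is the correct conclusion, since for $n$ with an empty index set on the left both sides vanish and $N_0(n)=N_1(n)$.
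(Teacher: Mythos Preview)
Your overall plan---reduce the corollary to nonnegativity of the left side of Theorem~1.2 and then verify that each summand $d+\delta$ is nonnegative---is the same as the paper's. However, the step where you justify nonnegativity contains a genuine gap. You assert that ``the product $pn-r^2>0$ for $n\ge1$ and $0<r<p$,'' and conclude from this that both $d$ and $\delta$ are positive. That inequality is false in general: for instance $p=5$, $r=3$, $n=1$ gives $pn-r^2=-4$, and more generally $pn-r^2<0$ whenever $n<r^2/p$, which can certainly occur since $r$ may be as large as $p-1$. Your fallback (reading positivity of $d,\delta$ ``straight from equation~(1.2)'') is likewise an assertion rather than an argument; the infinite product in~(1.2) has factors $(1-q^{pn})^2$ in the numerator, so its coefficients are not manifestly nonnegative.

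The paper closes this gap differently: it does not claim $\delta>0$, but shows directly that $d+\delta>0$. Using the rewriting $\delta=(np-r^2)/d$ with $d>0$ and $d\equiv r\pmod p$, write $d=pm+r$ with $m\ge0$ (possible since $0<r<p$). Then
\[
d^2+np-r^2=(pm+r)^2+np-r^2=p^2m^2+2pmr+np>0,
\]
because $np>0$ and $m\ge0$; dividing by $d>0$ gives $d+\delta>0$. This argument is valid even when $np-r^2<0$. Your approach can in fact be repaired by showing that the index set is empty whenever $np-r^2\le0$ (write $d=r+pm$, $\delta=-r-pm'$ with $m,m'\ge0$ and derive $n=-r(m+m')-pmm'\le0$), but as written the proposal relies on a false inequality.
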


\section{proofs of results}
For $1<|z|<|q|^{-1},$ $|q|<1,$ Andrews [1, Lemma 1] discovered the expansion,
\begin{equation} \prod_{n=1}^{\infty} \frac{(1-q^{n})^2}{(1-zq^{n})(1-z^{-1}q^{n-1})}=\sum_{\substack{k,l=-\infty \\k\ge|l|}}^{\infty}(-1)^{k+l}z^{l}q^{(k^2-l^2)/2+(k+l)/2}.\end{equation}

\begin{proof}[Proof of Theorem~\ref{thm:thm1}] This formula follows from replacing $q$ by $q^{p}$ in (2.1) and then $z=q^{-r},$ where $0<r<p,$ $(r,2p)=1.$ Equating coefficients of $q^{n}$ with (1.1) now gives the result.

\end{proof}

\begin{proof}[Proof of Corollary~\ref{thm:Cor1}] This follows from inspecting the weight in the sum on right hand side of Theorem 1.1, and noting that $D_{\pm r, p}(n)$ is a non-negative quantity.

\end{proof}

\begin{proof}[Proof of Theorem~\ref{thm:thm2}] We use the $(z,q)\rightarrow(q^{-r},q^{p})$ case of (2.1) we obtained in the proof of Theorem 1.1 in (1.2), and then equate coefficients of $q^n$ with (1.2).

\end{proof}

\begin{proof}[Proof of Corollary~\ref{thm:Cor2}] It is enough to show the left side of Theorem 1.2 is non-negative. One way to see this is to use the re-writing found in [2, Proposition 1, eq.(8)],
$$\frac{1}{p}\sum_{\substack{ d|np-r^2, d>0 \\ d\equiv r\pmod{p}}}(d+\frac{np-r^2}{d}).$$
Notice that if the summand were positive then so would $d^2+np-r^2.$ Since $d\equiv r\pmod{p},$ there exists integers $m\ge0,$ such that $d^2+np-r^2=(pm+r)^2+np-r^2=(pm)^2+2pmr+np.$ Since $np>0,$ it follows that the sum is non-negative.

\end{proof}

\section{Observations} 
It is interesting to mention that our results coupled with [2] suggest that indefinite quadratic forms seem to have some natural connection with Klein forms. This seems to follow naturally from the fact that Klein forms have integral weight, and Hecke modular forms associated with indefinite quadratic forms have weight one [4]. The example presented here of this phenomenon being (1.1), which is a holomorphic modular form of weight one [2, Theorem 2, eq.(1)]. The product on the left side of (1.2), the square of (1.1), has also been noted as a holomorphic modular form of weight two [2, Theorem 2, eq.(8)].

1390 Bumps River Rd. \\*
Centerville, MA
02632 \\*
USA \\*
E-mail: alexpatk@hotmail.com, alexepatkowski@gmail.com
\end{document}